\newtheorem{theorem}{Theorem}
\newtheorem{proposition}{Proposition}
\newtheorem{definition}{Definition}
\DeclareMathOperator{\Cliff}{Cliff}
\DeclareMathOperator{\gon}{gon}
\DeclareMathOperator{\rank}{rk}
\newtheorem*{subject}{2000 Mathematics Subject Classification}
\newtheorem*{keywords}{Keywords}
\theoremstyle{remark}
\author{Marc Coppens\footnote{KU  Leuven, Department of Mathematics, Section of Algebra,
Celestijnenlaan 200B bus 2400 B-3001 Leuven, Belgium; email: marc.coppens@kuleuven.be.}}
\title{The relation between the gonality and the Clifford index of a chain of cycles. }
\date{}
\begin{document}
\maketitle \noindent

\begin{abstract}
For a chain of cycles $\Gamma$ we prove that $\Cliff (\Gamma)=\gon (\Gamma)-2$.
\end{abstract}

\begin{subject}
05C25, 14T15
\end{subject}

\begin{keywords}
Clifford index, gonality, tableaux, metric graphs
\end{keywords}

\section{Introduction}\label{section1}

During the recent decades, a theory of divisors on metric graphs is developed having lots of properties similar to those on smooth projective curves.
For the definitions of this theory and the motivations to develop such a theory we refer to \cite{ref1}.

Using this theory, one can define, on a metric graph $\Gamma$, the gonality $\gon (\Gamma)$ (see Definition {\ref{definition2}) and the Clifford index $\Cliff (\Gamma)$ (see Definition \ref{definition4}) in the same way as in the case of a smooth curve $C$.
As in the case of smooth curves the inequality $\Cliff (\Gamma) \leq \gon (\Gamma)-2$ holds by definition.
In the case of a smooth curve $C$ the inequality $\gon (C)-3 \leq \Cliff (C)$ is proved in \cite{ref2}.
Whether or not this inequality also holds for metric graphs is mentioned as an unknown fact in \cite{ref3}.
The importance of this inequality comes from the fact that $\gon (\Gamma)$ is defined using divisors of rank 1 on $\Gamma$, while $\Cliff (\Gamma)$ gives information on divisors on $\Gamma$ of any prescribed rank.

In this paper we consider this  question for chains of cycles and we prove the following result.

\begin{theorem}\label{theorem1}
If $\Gamma$ is a chain of cycles of genus $g \geq 3$ then $\Cliff (\Gamma)=\gon (\Gamma)-2$
\end{theorem}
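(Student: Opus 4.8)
Since $\Cliff(\Gamma)\le\gon(\Gamma)-2$ holds by definition, as recalled in the introduction, the plan is to prove the reverse inequality $\Cliff(\Gamma)\ge\gon(\Gamma)-2$. Unwinding the definition, the claim is that every divisor class $D$ on $\Gamma$ with $\rank(D)\ge 1$ and $\rank(K_\Gamma-D)\ge 1$ satisfies $\deg(D)-2\rank(D)\ge\gon(\Gamma)-2$ (if no such $D$ exists the statement holds by the usual convention, which already settles $g=3$). Fix such a $D$ realizing the Clifford index, put $r=\rank(D)$, and note that $\Cliff(K_\Gamma-D)=\Cliff(D)$; by Riemann--Roch for metric graphs $\rank(K_\Gamma-D)-\rank(D)=g-1-\deg(D)$, so after possibly exchanging $D$ with $K_\Gamma-D$ we may assume $\deg(D)\le g-1$, hence $\rank(K_\Gamma-D)\ge r$. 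If $r=1$ then $\deg(D)\ge\gon(\Gamma)$ by the definition of gonality and we are done; and if $\gon(\Gamma)=2$ the inequality is just Clifford's inequality $\deg(D)\ge 2\rank(D)$, known for all metric graphs. So assume $r\ge 2$ and $\gon(\Gamma)\ge 3$; we must prove $\deg(D)\ge 2r+\gon(\Gamma)-2$.

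The plan is to argue inside the combinatorial model of divisor classes on a chain of cycles. Write $\Gamma$ as cycles $C_1,\dots,C_g$ strung together by bridges, with a distinguished vertex $v_0$ on $C_1$. By the theory of Cools--Draisma--Payne--Robeva every divisor class has a unique $v_0$-reduced representative, supported by at most one point of each $C_i$ together with a multiple of $v_0$, and the rank of a class is given by an explicit chip-firing/winding algorithm; the set of effective classes of prescribed degree and rank is thereby encoded by combinatorial data depending on the torsion profile $(\ell_i,m_i)_i$ of $\Gamma$. In this language $\gon(\Gamma)$ is the minimal degree of a class of rank $\ge 1$, and $\Cliff(\Gamma)$ is the minimum of $\deg-2\rank$ over classes of rank $\ge 1$ whose Serre dual also has rank $\ge 1$; the theorem is the assertion that these two combinatorial optima differ by exactly $2$. (Alternatively one could extract closed formulas for $\gon(\Gamma)$ and $\Cliff(\Gamma)$ directly from the torsion profile and compare them; this is conceptually transparent but bookkeeping-heavy, and I would prefer the structural route below.)

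The structural input I would use is a Martens-type estimate for chains of cycles — the analogue of H. Martens' theorem — bounding how fast the rank of an effective class can grow with its degree, together with the rigidity of $v_0$-reduced divisors (at most one chip per cycle). Starting from $D$ with $r=\rank(D)\ge 2$, subtracting a general point lowers the rank by exactly $1$, so after $r-1$ steps one obtains a class of rank $1$ and degree $\deg(D)-(r-1)$, whence $\gon(\Gamma)\le\deg(D)-r+1$; this alone only gives $\deg(D)-2r\ge\gon(\Gamma)-r-1$, which is too weak when $r\ge 2$, and the missing $r-1$ must be recovered from the geometry. The idea is that the condition $\rank(D)\ge r$ forces the $v_0$-reduced forms of $D$, of its successive twists $D-P_1-\cdots-P_j$, and of $K_\Gamma-D$, to occupy the cycles of $\Gamma$ in a prescribed and rather rigid pattern; from such a pattern one reads off a pencil supported on a sub-stretch of $\Gamma$, and tracking degrees one shows this pencil has degree at most $\deg(D)-2r+2$. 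Since every pencil on $\Gamma$ has degree $\ge\gon(\Gamma)$, this yields $\deg(D)\ge 2r+\gon(\Gamma)-2$, as required.

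The main obstacle is precisely this last point: proving, purely combinatorially, that the one-chip-per-cycle structure of reduced divisors on a chain of cycles is incompatible with a higher-rank class of anomalously small Clifford index — in other words, that a chain of cycles never behaves like a smooth plane curve, or like any of the other curves realizing $\Cliff=\gon-3$. The delicate part is the borderline torsion profiles, those making $\Gamma$ trigonal, bielliptic, or the tropical shadow of a nodal plane model; it is for these that the Martens-type bound and the accompanying examples and counterexamples pinning down which equality configurations actually occur are needed. Once the borderline cases are dispatched, the inequality $\deg(D)\ge 2r+\gon(\Gamma)-2$ holds, giving $\Cliff(\Gamma)\ge\gon(\Gamma)-2$ and, together with the trivial reverse inequality, the theorem.
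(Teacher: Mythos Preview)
Your proposal is not a proof but a plan, and the plan has a genuine gap at its core. You correctly reduce the theorem to showing that any divisor $D$ with $\rank(D)=r\ge 2$ and $\rank(K_\Gamma-D)\ge 1$ satisfies $\deg(D)\ge 2r+\gon(\Gamma)-2$, and you correctly observe that the naive point-subtraction argument only gives $\deg(D)\ge r+\gon(\Gamma)-1$, which is too weak by $r-1$. But the step where you propose to recover this deficit --- ``from such a pattern one reads off a pencil supported on a sub-stretch of $\Gamma$, and tracking degrees one shows this pencil has degree at most $\deg(D)-2r+2$'' --- is precisely the entire content of the theorem, and you do not carry it out. You yourself flag this (``The main obstacle is precisely this last point''), and then invoke an unstated Martens-type bound and unspecified ``examples and counterexamples'' to dispatch the borderline torsion profiles. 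None of that is supplied, so nothing is proved.

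The paper's argument is quite different from what you sketch. Rather than working with $v_0$-reduced divisors in the CDPR model, it passes through Pflueger's characterization: a divisor of degree $d$ and rank $\ge r$ exists on $\Gamma$ if and only if there is an $\underline{m}$-displacement tableau $t:[(g-d+r)\times(r+1)]\to\{1,\dots,g\}$. The theorem is then reformulated as a purely combinatorial statement (Theorem~\ref{theorem2}): from such a tableau one must produce an $\underline{m}$-displacement tableau on $[(g-d+2r-1)\times 2]$. This is proved by induction on $r$, restricting $t$ to its first $r$ columns to get a tableau on a shorter chain $C_1\cup\cdots\cup C_{g'}$, applying the inductive hypothesis there, and then extending by hand using the unused values $g'+1,\dots,g$. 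The delicate part is a descending case analysis on the entries $t(g-d+r-k,r)$ and $t(g-d+r-k,r+1)$ for $k=0,1,2,\dots$, which either terminates with enough unused values to extend, or forces $t(i,j)=i+j-1$ on a large block (whence $m_i=2$ there and one writes down the $2$-column tableau directly). There is no Martens-type dimension estimate and no separate treatment of trigonal or bielliptic profiles; the torsion data enters only through the congruence condition in the definition of displacement tableau.
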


Chains of cycles were considered for the first time in \cite{ref4} where the authors used those metric graphs to obtain a tropical proof of the Brill--Noether Theorem.
Later on those metric graphs are used to prove many results in Brill--Noether Theory (see some applications in \cite{ref1}, see e.g. \cite{ref5}).

Although the statement in the theorem is stronger than the result for curves in \cite{ref2}, this stronger statement does not hold for arbitrary metric graphs.
As an example, if $\Gamma$ is a metric graph having the complete graph $K_n$ as an underlying finite graph, then $\gon (\Gamma)=n-1$ while $\Cliff (\Gamma)=n-4$ (see \cite{ref6}, including more results concerning divisors on such graphs).

In the proof of the theorem we rely on the results from \cite{ref7} concerning divisor theory on chains of cycles.
In particular from this theory the theorem is translated into a statement on so-called $\underline{m}$-displacement tableaux (see Theorem \ref{theorem2}).

In the second section we give a short overview on the results of \cite{ref7} and we make a statement (see Theorem \ref{theorem2}) on $\underline{m}$-displacement tableaux implying Theorem \ref{theorem1}.
In the third section we give a proof of that statement.

\section{Some results on divisor theory on chains of cycles}\label{section2}

Let $\Gamma$ be a metric graph of genus $g \geq 2$.

\begin{definition}\label{definition1}
The rank $\rank (D)$ of a divisor $D$ on $\Gamma$ is the largest integer $r$ such that for each effective divisor $E$ of degree $r$ on $\Gamma$ there exists an effective divisor $D'$ on $\Gamma$ equivalent to $D$ containing $E$.
(In case $D$ is not equivalent to an effective divisor then we put $\rank (D)=-1$.)
\end{definition}

\begin{definition}\label{definition2}
The gonality $\gon (\Gamma)$ of $\Gamma$ is the smallest integer $d$ such that there exists a divisor $D$ on $\Gamma$ of degree $d$ such that $\rank (D)\geq 1$.
\end{definition}

On a metric graph $\Gamma$ of genus $g$ we have a canonical divisor $K_{\Gamma}$ and there is a Riemann--Roch Theorem as in the case of curves.
Divisors having rank at most 0 are not interesting, so we are only interested in divisors $D$ satisfying $\rank (D) \geq 1$ and $\rank (K_{\Gamma} -D)\geq 1$.
Because of the Riemann-Roch Theorem this second condition is equivalent to $\rank (D)-d+g\geq 2$.

\begin{definition}\label{definition3}
The Clifford index of a divisor $D$ on $\Gamma$ is $\Cliff (D)=\deg (D)-2 \rank (D)$.
\end{definition}

The Clifford inequality states $\Cliff (D) \geq 0$ if $\rank (D)\geq 1$ and $\rank (D)-d+g \geq 2$.

\begin{definition}\label{definition4}
The Clifford index $\Cliff (\Gamma)$ is the smallest integer $c$ such that there exists a divisor $D$ with $\rank (D) \geq 1$ and $\rank (D)-d+g \geq 2$ with $\Cliff (D)=c$.
\end{definition}

From now on $\Gamma$ is a chain of cycles of genus $g$.
Such a chain is constructed as follows.
Take $g$ graphs $C_1, \cdots, C_g$ each of them isometric to a circle (those are the cycles).
On each cycle $C_i$ we choose two different points $v_i$ and $w_i$.
For $1 \leq i \leq g-1$ the points $w_i$ and $v_{i+1}$ are connected by a straight line segment.
We use the notation $(C_i, v_i, w_i)$ throughout the paper.

\begin{figure}[h]
\begin{center}
\includegraphics[height=2 cm]{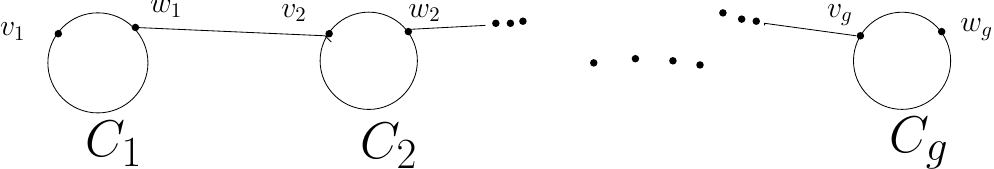}
\caption{a chain of cycles }\label{Figuur 1}
\end{center}
\end{figure}

The length of those segments does not influence the divisor theory on $\Gamma$.
In particular it can be taken equal to zero.
Taking it non-zero is handsome to distinguish e.g. the points $w_1$ on $C_1$ and $v_2$ on $C_2$.
In order to prove Theorem \ref{theorem1} it is enough to prove the following statement on such a graph $\Gamma$.
If $D$ is a divisor of degree $d$ and rank at least $r$ on $\Gamma$ with $r \geq 1$ and $r-d+g \geq 2$ then there exists a divisor $D'$ of degree $d-2r+2$ and rank at least 1 on $\Gamma$.

We now state some definitions and results from \cite{ref7} on chains of cycles $\Gamma$.
Such a chain of cycles $\Gamma$ has torsion profile $\underline{m}=(m_2, \cdots, m_g)$ consisting of $g-1$ non-negative integers defined as follows.

\begin{definition}\label{definition5}
Let $l_i$ be the length of the cycle $C_i$ and let $l(v_i,w_i)$ be the length of the clockwise arc on $C_i$ from $v_i$ to $w_i$.
We put $m_i=0$ if $l_i$ is an irrational multiple of $l(v_i,w_i)$.
Otherwise $m_i$ is the minimal positive integer such that $m_il(v_i,w_i)$ is an integer multiple of $l_i$.
\end{definition}

\begin{definition}\label{definition6}
For positive integers $m$ and $n$ we write $[m \times n]$ to denote the set $\{ 1, \cdots , m \} \times \{ 1, \cdots , n \}$.
Let $\underline{m}=(m_2, \cdots, m_g)$ be a sequence of $g-1$ non-negative integers.
An $\underline{m}$-displacement tableau on $[ m \times n ]$ is a function $t : [m \times n] \rightarrow \{1, \cdots, g \}$ such that
\begin{itemize}
\item $t$ is strictly increasing if one of the two coordinates is fixed.
\item if $t(x,y)=t(x',y')$ then $x-y \equiv x'-y' \mod m_{t(x,y)}$.
\end{itemize}
\end{definition}

From the results in \cite{ref7} we obtain
\begin{proposition}\label{proposition1}
Let $\Gamma$ be a chain of cycles of genus $g$ with torsion profile $\underline{m}$. 
Given $r \in \mathbb{Z}_{\geq 1}$ and $d \in \mathbb{Z}$ with $r-d+g \geq 2$.
There exists a divisor $D$ on $\Gamma$ of degree $d$ and rank at least $r$ if and only if there exists an $\underline{m}$-displacement tableau $t : [(g-d+r) \times (r+1) ] \rightarrow \{ 1, \cdots ,g \}$.
\end{proposition}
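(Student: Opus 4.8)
The plan is to deduce Proposition~\ref{proposition1} from the divisor--combinatorial dictionary established in \cite{ref7}, doing little more than matching conventions. Recall, roughly speaking, the mechanism underlying that dictionary: on a chain of cycles one analyses, for a $v_1$-reduced representative of a divisor class, how chips can be shifted across the successive cycles $(C_i,v_i,w_i)$; the chain structure forces this to be decided cycle by cycle, with the integer $m_i$ of Definition~\ref{definition5} governing the ambiguity on $C_i$, and the combinatorics of which cycle ``absorbs'' each chip is recorded by a displacement tableau whose congruence condition is exactly the torsion constraint of Definition~\ref{definition6}. The upshot proved in \cite{ref7} is that, for a chain of cycles $\Gamma$ of genus $g$ with torsion profile $\underline m$, the Brill--Noether locus $W^r_d(\Gamma)$ is non-empty precisely when there is an $\underline m$-displacement tableau of genus $g$ on the rectangle $[(g-d+r)\times(r+1)]$. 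Since ``there exists a divisor $D$ on $\Gamma$ of degree $d$ and rank at least $r$'' is by definition the assertion $W^r_d(\Gamma)\neq\varnothing$, the proposition is this statement of \cite{ref7}, and the only work is to check that the shape and genus normalisations agree and that the hypotheses under which \cite{ref7} is stated are met here.

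For the latter: the assumptions $r\geq 1$ and $r-d+g\geq 2$ force both $r+1\geq 2$ and $g-d+r\geq 2$, so $[(g-d+r)\times(r+1)]$ is a genuine (at least $2\times 2$) array and no degenerate boundary case of \cite{ref7} is in play; moreover $r-d+g\geq 2$ is, by Riemann--Roch, exactly the condition $\rank(K_\Gamma-D)\geq 1$, which is the regime in which we have phrased the problem. If the result of \cite{ref7} is instead formulated for divisors of rank \emph{exactly} $r$, the passage to rank \emph{at least} $r$ is immediate: a divisor of degree $d$ and rank $r'\geq r$ yields, by \cite{ref7}, an $\underline m$-displacement tableau on $[(g-d+r')\times(r'+1)]$, and since $g-d+r'\geq g-d+r\geq 2$ and $r'+1\geq r+1\geq 2$, the restriction of that tableau to the upper-left $[(g-d+r)\times(r+1)]$ sub-array is again an $\underline m$-displacement tableau --- strict monotonicity along rows and columns and the congruence condition on repeated values are all inherited by a sub-rectangle --- while conversely a tableau on $[(g-d+r)\times(r+1)]$ produces a divisor of degree $d$ and rank at least $r$.

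I expect essentially all of the difficulty to sit in the first step: identifying the exact statement in \cite{ref7} that is being invoked and reconciling its indexing of the rectangle (including any Riemann--Roch shift hidden in the choice between $g-d+r$ and $g-d+r-1$, rows versus columns, or a normalisation of entries by an offset) with the formulation given here. The remaining points --- in particular the sub-rectangle argument above and the verification that $r\geq 1$ and $r-d+g\geq 2$ land inside the range covered by \cite{ref7} --- are routine.
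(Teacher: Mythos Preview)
Your approach is exactly what the paper does: Proposition~\ref{proposition1} is stated there without proof, simply as a consequence of the results in \cite{ref7}, so your deduction from Pflueger's dictionary (together with the routine sub-rectangle and convention checks) matches the paper's treatment. If anything, you are being more explicit than the paper itself.
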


So in order to prove Theorem \ref{theorem1} it is enough to prove
\begin{theorem}\label{theorem2}
Let $\underline{m}=(m_2, \cdots, m_g)$ be a sequence of $g-1$ non-negative integers.
Let $r \in \mathbb{Z}_{\geq 1}$ and $d \in \mathbb{Z}$ with $r-d+g \geq 2$.
If there exists an $\underline{m}$-displacement tableau $t : [(g-d+r) \times (r+1) ] \rightarrow \{ 1, \cdots, g \}$ then there exists an $\underline {m}$-displacement tableau $t': [(g-d+2r-1) \times 2] \rightarrow \{ 1, \cdots, g \}$.
\end{theorem}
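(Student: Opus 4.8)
The plan is to deduce Theorem~\ref{theorem2} from a single reduction step that lowers the number of columns by one, applied repeatedly. I start with two easy observations. First, transposing coordinates sends an $\underline m$-displacement tableau on $[p\times q]$ to one on $[q\times p]$: the monotonicity conditions are symmetric, and if $t(y,x)=t(y',x')$ then $y-x\equiv y'-x'$, hence $x-y\equiv x'-y'$, modulo the relevant $m_\bullet$. Second, writing $a=g-d+r$ and $b=r+1$, the hypotheses $r\ge 1$ and $r-d+g\ge 2$ force $a\ge 2$ and $b\ge 2$, and the target shape $[(g-d+2r-1)\times 2]$ is exactly $[(a+b-2)\times 2]$. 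When $\min(a,b)=2$ this target is either $[a\times b]$ itself (if $b=2$) or the transpose of $[a\times b]$ (if $a=2$), so there is nothing to do. Hence, after possibly transposing, I may assume the given tableau lives on $[p\times q]$ with $p\ge q\ge 3$, and it suffices to prove the following.

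\emph{Key step.} If $p\ge q\ge 2$ and there is an $\underline m$-displacement tableau on $[p\times q]$, then there is one on $[(p+1)\times (q-1)]$.

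Granting the key step, applying it $q-2$ times carries $[p\times q]$ to $[(p+1)\times(q-1)]$, $[(p+2)\times(q-2)]$, \dots, $[(p+q-2)\times 2]$, and $p+q-2=a+b-2=g-d+2r-1$ is precisely the shape required. Note that the inequality ``number of rows $\ge$ number of columns'' is preserved along this chain, so the hypothesis $p\ge q$ of the key step is always satisfied, and Theorems~\ref{theorem2} and~\ref{theorem1} follow.

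To prove the key step I would argue directly with a tableau $t\colon[p\times q]\to\{1,\dots,g\}$. Passing from $[p\times q]$ to $[(p+1)\times(q-1)]$ amounts to deleting the last column and appending a new bottom row of length $q-1$; since $p\ge q$ the target has strictly fewer cells than the source, which is the combinatorial reflection of the inequality being proved (when $m_i=0$ for all $i$ it reduces to the elementary implication $\rho(g,r,d)\ge 0\Rightarrow \rho(g,1,d-2r+2)\ge 0$, the difference of the two Brill--Noether numbers being $(r-1)(g-d+r-2)\ge 0$). The natural first attempt is to keep the first $q-1$ columns of $t$ (in rows $1,\dots,p$) unchanged and to fill the $q-1$ new cells $(p+1,1),\dots,(p+1,q-1)$ with suitable values read off from the deleted column, using $p\ge q-1$ to ensure there are enough of them to place in increasing order with each new entry exceeding the one directly above it.

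The main obstacle is the congruence condition. If a value $v$ ends up used both in the retained part of the tableau and at a cell $(p+1,j)$ of the new row, the two occurrences sit at cells whose contents differ by an amount one cannot control in the crude construction — for a repetition forced by a tight range of available values one is pushed to require $m_v\mid 2$, which need not hold. Overcoming this is the heart of the proof, and is where the structure of displacement tableaux from \cite{ref7} enters: the occurrences of a fixed value $v$ form an \emph{antichain} in $[p\times q]$ all of whose cells have content $\equiv c_v\pmod{m_v}$, and I would exploit this $m_v$-spacing, together with the slack furnished by $p\ge q$, to choose the new entries so that any repeated value is realized at cells with mutually congruent contents (better still, so that no problematic repetition occurs at all). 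Establishing that such a choice is always available, uniformly in the torsion profile $\underline m$, is the crux of the argument; once it is in hand, the iteration above completes the proof.
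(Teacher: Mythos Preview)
Your reductions are correct and tidy: transposition does preserve the $\underline m$-displacement condition, the base cases $\min(a,b)=2$ are immediate, and iterating the ``key step'' $q-2$ times would indeed land on $[(a+b-2)\times 2]$. But the key step itself is never proved. You name the obstacle (the congruence condition when a value is reused in the new bottom row), call overcoming it ``the heart of the proof'' and ``the crux of the argument,'' and then stop at the assertion that one can ``choose the new entries so that any repeated value is realized at cells with mutually congruent contents.'' No such choice is exhibited, and no mechanism is given for producing one. That missing step \emph{is} the theorem. Note too that your key step, read through Proposition~\ref{proposition1}, is the implication ``$\Gamma$ has a $g^r_d$ $\Rightarrow$ $\Gamma$ has a $g^{r-1}_{d-2}$''; for smooth curves this fails (a smooth plane quintic carries a $g^2_5$ but no $g^1_3$), so any correct argument must genuinely exploit the combinatorics of displacement tableaux on chains of cycles. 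Your sketch does not indicate where that exploitation happens.

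For comparison, the paper's proof is organized quite differently. It also inducts on $r$, but it does not attempt the one-column reduction on the same chain. Instead it restricts $t$ to its first $r$ columns and regards this as a tableau for the \emph{shorter} chain $C_1\cup\cdots\cup C_{g'}$, where $g'=t(g-d+r,r)$; the induction hypothesis (for $r-1$) is then applied on that sub-chain to produce a $[\,\cdot\times 2]$ tableau there directly, and the unused values $\{g'+1,\dots,g\}$ are appended to extend back to $\Gamma$. The substance of the paper's argument is a case analysis on the boundary values $t(g-d+r-k,r)$ and $t(g-d+r-k,r+1)$ that controls when this extension succeeds, together with a separate Riemann--Roch style duality (Lemma~\ref{lemma1}) to dispose of $d\ge g$. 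None of this structure appears in your proposal, and nothing in the proposal replaces it.
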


\section{Proof of the theorem}\label{section3}

In case $r=1$ there is nothing to prove so we are going to assume $r \geq 2$ and we assume the statement of Theorem \ref{theorem2} holds in case $r$ is replaced by any smaller integer.

In the proof we are going to use tableaux obtained from restricting $t$ to subrectangles of $[(g-d+r)\times (r+1)]$. 
They are displacement tableaux using subsequences of $\underline{m}$.
Therefore, for $1 \leq j \leq g$ we introduce $\underline{m}_j=(m_2, \cdots, m_j)$ (for $j=g$ this is $\underline{m}$, for $j=1$ this is empty).
In particular an $\underline{m}_j$-displacement tableau has values in $\{ 1, \cdots, j\}$.

Since $t$ is an $\underline{m}$-displacement tableau we have $g-d+2r \leq g$, equivalently $d \geq 2r$.
In case $d=2r$ we have $t(i,j)=i+j-1$ on $[(g-d+r)\times (r+1)]$.
It follows that $m_2 = \cdots =m_{g-1}=2$.
Then on $[(g-d+2r-1)\times 2] = [ (g-1) \times 2]$ we find $t'(i,j)=i+j-1$ is an $\underline {m}$-displacement tableau.

So, from now on, we can assume $d>2r$.

\

$\underline{\text{Step 1}}$: Theorem \ref{theorem2} holds if $t(g-d+r,r)<g-1$.

If $t(g-d+r,r)<g-1$ then the restriction of $t$ to $[(g-d+r)\times r]=[((g-2)-(d-3)+(r-1))\times ((r-1)+1)]$ is an $\underline {m}_{g-2}$-displacement tableau.
Since $r \geq 2$ of course $r-1 \geq 1$ while $(g-2)-(d-3)+(r-1)=g-d+r \geq 2$, so we can apply the induction hypothesis to that restriction.
We obtain the existence of an $\underline{m}_{g-2}$-tableau $\tilde {t'}$ on $[((g-2)-(d-3)+2(r-1)-1) \times 2]=[(g-d+2r-2)\times 2]$.
We obtain an $\underline{m}$-displacement tableau $t'$ on $[(g-d+2r-1)\times 2]$ having restriction $\tilde {t'}$ to $[(g-d+2r-2)\times 2]$ and taking $t'(g-d+2r-1,1)=g-1$ and $t'(g-d+2r-1,2)=g$ (further on we will say $\tilde {t'}$ is extended).

In case $t(g-d+r,r) \geq g-1$ then $t(g-d+r,r)=g-1$ and $t(g-d+r,r+1)=g$. Because of Step 1, from now on we can assume those two equalities hold.

\

$\underline{\text{Step 2}}$: Theorem \ref{theorem2} holds if $t(g-d+r-1,r+1)=g-1$.

If $t(g-d+r-1,r+1)=g-1=t(g-d+r,r)$ then $m_{g-1}=2$.
The restriction of $t$ to $[(g-d+r)\times r]=[((g-1)-(d-2)+(r-1))\times ((r-1)+1)]$ is an $\underline{m}_{g-1}$-displacement tableau.
Again we can apply the induction hypothesis implying there exists an $\underline{m}_{g-1}$-tableau $\tilde {t'}$ on $[((g-1)-(d-2)+2(r-1)-1) \times 2]=[(g-d+2r-2) \times 2]$.
Because $m_{g-1}=2$ we can extend it to an $\underline{m}$-displacement tableau $t'$ on $[(g-d+2r-1) \times 2]$ taking $t'(g-d+2r-1,1)=g-1$ and $t'(g-d+2r-1,2)=g$.

From now on we can assume also $t(g-d+r-1,r+1)\leq g-2$ (of course $t(g-d+r-1,r+1) >g-1$ is not possible).

This inequality implies $g-d+2r-1 \leq g-2$, equivalently $d \geq 2r+1$.
In case $d=2r+1$ we obtain $t(i,j)=i+j-1$ for $(i,j) \in [(g-d+r-1) \times (r+1)]$.
It follows $m_2 = \cdots = m_{g-3}=2$.
We find an $\underline{m}$-displacement tableau $t'$ on $[(g-d+2r-1) \times 2]=[(g-2) \times 2]$ by taking $t'(i,j)=i+j-1$ in case $(i,j) \in  [(g-3) \times 2]$, $t'(g-2,1)=g-1$ and $t'(g-2,2)=g$.
Therefore from now on we can also assume $d >2r+1$.

\

$\underline{\text{Step 3}}$: Theorem \ref{theorem2} holds if $t(g-d+r-1,r)<g-3$.

If $t(g-d+r-1,r)<g-3$ then the restriction of $t$ to $[(g-d+r-1)\times r]=[((g-4)-(d-4)+(r-1))\times ((r-1)+1)]$ is an $\underline {m}_{g-4}$-displacement tableau.
Since $r \geq 2$ we still have $r-1 \geq 1$.
We have $(g-4)-(d-4)+(r-1)=g-d+r-1$.
In case $g-d+r=2$ we have that $t'(i,j)=t(j,i)$ defines an $\underline{m}$-displacement tableau on $[(r+1)\times 2]=[(2r-r+1) \times 2]=[(2r+g-d-1)\times 2]$, so the theorem is proved in that case.
In case $g-d+r>2$ we have $(g-4)-(d-4)+(r-1)\geq 2$ and we can use the induction hypothesis.
In that case we obtain the existence of an $\underline {m}_{g-4}$-displacement tableau $\tilde {t'}$ on $[((g-4)-(d-4)+2(r-1)-1)\times 2]=[(g-d+2r-3) \times 2]$.
This can be extended to an $\underline{m}$-displacement tableau $t'$ on $[(g-d+2r-1) \times 2]$ by taking $t'(g-d+2r-2,1)=g-3$, $t'(g-d+2r-2,2)=g-2$, $t'(g-d+2r-1,1)=g-1$ and $t'(g-d+2r-1,2)=g$.

So we proved Theorem \ref{theorem2} unless $t(g-d+r-1,r+1)\leq g-2$ and $t(g-d+r-1,r)\geq g-3$.
From the definition of a tableau those inequalities are equivalent to $t(g-d+r-1,r+1)= g-2$ and $t(g-d+r-1,r)= g-3$.
From now on we assume those equalities.

\

For an integer $1 \leq k \leq g-d+r-1$ we formulate the following assumption.

$\underline{\text{Assumption(k)}}$: For all integers $0 \leq k' \leq k$ we have $t(g-d+r-k',r+1)=g-2k'$ and $t(g-d+r-k',r)=g-2k'-1$.

From the previous steps it follows we can assume that Assumption(1) holds.
Also in case Assumption(k) holds then we need $g-2k \leq g-d+2r-k$, or equivalently $d \leq 2r+k$.
In case $d=2r+k$ we have $t(i,j)=i+j-1$ for all $(i,j) \in [(g-d+r-k)\times (r+1)]$
In particular $m_2 = \cdots =m_{g-2k-1}=2$.

\

Now we make an induction step on $k$.

$\underline{\text {Step 4}}$: Assume $k<g-d+r-1$ and $d>2r+k$. If Assumption(k) holds but Assumption(k+1) does not hold then Theorem \ref{theorem2} holds.

The proof of this induction step is similar to steps 2 and 3.

\

Restricting $t$ to $[(g-d+r-k) \times r]=[((g-2k-1)-(d-k-2)+(r-1))\times ((r-1)+1)]$ we obtain an $\underline{m}_{g-2k-1}$-displacement tableau.
We have $r-1\geq 1$ and $(g-2k-1)-(d-k-2)+(r-1)=g-k-d+r \geq 2$.
By assumption we obtain an $\underline{m}_{g-2k-1}$-tableau $\tilde {t'}$ on $[((g-2k-1)-(d-k-2)+2(r-1)-1) \times 2]=[(g-d+2r-k-2) \times 2]$.
In case $t(g-d+r-k-1,r+1)=g-2k-1$ we have $m_{g-2k-1}=2$.
Then we can extend $\tilde {t'}$ by $t'(g-d+2r-k-2+x,1)=g-2k-3+2x$ and $t'(g-d+2r-k-2+x,2)=g-2k-2+2x$ for $1 \leq x \leq k+1$, which gives rise to an $\underline{m}$-displacement tableau on $[(g-d+2r-1)\times 2]$.

To continue the proof we can assume $t(g-d+r-k-1,r+1) \leq g-2k-2$.
In case $t(g-d+r-k-1,r)\leq g-2k-4$ then the restriction of $t$ to $[(g-d+r-k-1) \times r]=[((g-2k-4)-(d-k-4)+(r-1)) \times ((r-1)+1)]$ is an $\underline{m}_{g-2k-4}$-displacement tableau.
We have $r-1 \geq 1$ and $(g-2k-4)-(d-k-4)+(r-1)=g-d-k+r-1 \geq 1$.

In case $g-d-k+r=2$ then we have $t(g-d+r-k,r+1)=t(2,r+1)=g-2k$.
We consider the $\underline{m}_{g-2k}$-displacement tableau $\tilde {t'}$ on $[(r+1)\times 2]=[(2r+1-(d-g+k+2))\times 2]=[(g-d+2r-1-k)\times 2]$ given by $\tilde {t'}(i,j)=t(j,i)$.
This can be extended to an $\underline{m}$-displacement tableau $t'$ on $[(g-d+2r-1)\times 2]$ by taking $t'(g-d+2r-1-i,1)=g-2i-1$ and $t'(g-d+2r-1-i,2)=g-2i$ for $0 \leq i \leq k-1$.

In case $g-d-k+r>2$ then the induction hypothesis gives the existence of an $\underline{m}_{g-2k-4}$-displacement tableau $\tilde {t'}$ on $[((g-2k-4)-(d-k-4)+2(r-1)-1)\times 2]=[(g-d-k+2r-3) \times 2]$.
This can be extended to an $\underline{m}$-displacement tableau $t'$ on $[(g-d+2r-1)\times 2]$ putting $t'(g-d+2r-k-2+x,1)=g-2k-3+2x$ and $t'(g-d+2r-k-2+x,2)=g-2k-2+2x$ for $0 \leq x \leq k+1$.

So we proved Theorem \ref{theorem2} unless $t(g-d+r-k-1,r)\geq g-2k-3$ and $t(g-d+r-k-1,r+1)\leq g-2k-2$.
By definition of a tableau we find equalities in this exceptional case and therefore Assumption(k+1).

\

So we have proved Theorem \ref{theorem2} unless there exists some integer $1 \leq k \leq g-d+r-1$ such that Assumption(k) holds with $d=2r+k$ or $k=g-d+r-1$.

\

$\underline{\text {Step 5}}$: If Assumption(k) holds with $k=d-2r$ then Theorem \ref{theorem2} holds.

It is already noted that in this case $m_2 = \cdots = m_{g-d+2r-k-1}=2$.
So $\tilde {t'}(i,j)=i+j-1$ defines an $\underline{m}_{g-d+2r-k}$-displacement tableau on $[(g-d+2r-1-k)\times 2]$.
This can be extended to become an $\underline{m}$-displacement tableau $t'$ on $[(g-d+2r-1) \times 2]$ taking $t'(g-d+2r-1-i,1)=g-2i-1$ and $t'(g-d+2r-1-i,2)=g-2i$ for $0 \leq i \leq k-1$.

\

$\underline{\text {Step 6}}$: If Assumption(k) holds with $g-d+r-k=1$ then Theorem \ref{theorem2} holds.

Since $t(g-d+r-(g-d+r-2),r+1)=t(2,r+1)=g-2(k-1)$ we obtain an $\underline{m}_{g-2(k-1)}$-displacement tableau $\tilde {t'}$ on $[(r+1)\times 2]$.
But $r+1=2r+1-(d+k-g+1)=g-d+2r-k$ so we can extend $\tilde {t'}$ to an $\underline{m}$-displacement tableau $t'$ on $[(g-d+2r-1) \times 2]$ taking $t'(g-d+2r-1-i,1)=g-2i-1$ and $t'(g-d+2r-1-i,2)=g-2i$ for $0 \leq i \leq k-2$.

This finishes the proof of Theorem \ref{theorem2}.

\

\textbf {Acknowledgement.}  I am grateful to the referee for his suggestions to improve the paper.

\begin{bibsection}
\begin{biblist}

\bib{ref1}{article}{
	author={M. Baker},
	author={D. Jensen},
	title={Degeneration of linear series from the tropical point of view and applications},
	journal={Nonarchimedean and Tropical Geometry, pp. 365-433. Simons Symp., Springer, Cham (2016)},
}
\bib{ref5}{article}{
	author={K. Cook-Powell},
	author={D. Jensen},
	title={Tropical methods in Hurwitz-Brill-Noether theory},
	journal={Advances in Math.},
	volume={398},
	year={2022},
	pages={ Paper No. 108199, 42 pp.},
}
\bib{ref4}{article}{
	author={F. Cools},
	author={J. Draisma},
	author={S. Payne},
	author={E. Robeva},
	title={A tropical proof of the Brill-Noether Theorem},
	journal={Advances in Math.},
	volume={230},
	year={2012},
	pages={759-776},
}
\bib{ref6}{article}{
	author={F. Cools},
	author={M. Panizzut},
	title={The gonality sequence of complete graphs},
	journal={Electr. J. Comb.},
	volume={24(4)},
	year={2017},
	pages={Paper No. 4.1, 20 pp},
}
\bib{ref2}{article}{
	author={M. Coppens},
	author={G. Martens},
	title={Secant spaces and Clifford's Theorem},
	journal={Compositio Math.},
	volume={78},
	year={1991},
	pages={193-212},
}
\bib{ref3}{article}{
	author={A. Deveau},
	author={D. Jensen},
	author={J. Kainic},
	author={D. Mitropolsky},
	title={Gonality of random graphs},
	journal={Involve},
	volume={9},
	year={2016},
	pages={715-720},	
}
\bib{ref7}{article}{
	author={N. Pflueger},
	title={Special divisors on marked chains of cycles},
	journal={J. Combin. Theory Ser. A},
	volume={150},
	year={2017},
	pages={182-207},
}

\end{biblist}
\end{bibsection}

\end{document}